\numberwithin{equation}{section}
\newtheorem{thm}{Theorem}[section]
\newtheorem{prop}[thm]{Proposition}
\newtheorem{lem}[thm]{Lemma}
\theoremstyle{definition}
 \newtheorem{dfn}[thm]{Definition}
\newtheorem{rmk}[thm]{Remark}
\newtheorem{ex}[thm]{Example}
\newcommand{\reals}{{\mathbb R}}
\newcommand{\integers}{{\mathbb Z}}
\newcommand{\Hom}{\operatorname{Hom}}
\newcommand{\qbar}{{\overline{Q}}}
\newcommand{\xbar}{{\overline{X}}}
\newcommand{\ybar}{{\overline{Y}}}
\newcommand{\zbar}{{\overline{Z}}}
\newcommand{\calc}{{\cal C}}
\newcommand{\cale}{{\cal E}}
\newcommand{\calk}{{\cal K}}
\newcommand{\calt}{{\cal T}}
\newcommand{\boldc}{{\mathbf C}}
\newcommand{\from}{\leftarrow}
\newcommand{\rel}{\mathbf{REL}}
\newcommand{\slrel}{\mathbf{SLREL}}
\newcommand{\gslrel}{\it{G}\mathbf{SLREL}}
\newcommand{\unitob}{\mathbf{1}}
\newcommand{\xfy}{X \stackrel{f}{\longleftarrow}Y}
\newcommand{\wwxfy}{X \stackrel{[f]}{\longleftarrow}Y}
\newcommand{\ygz}{Y \stackrel{g}{\longleftarrow}Z}
\begin{document}

\allowdisplaybreaks

\newcommand{\arXivNumber}{2408.06363}

\renewcommand{\PaperNumber}{101}

\FirstPageHeading

\ShortArticleName{The Wehrheim--Woodward Category of Linear Canonical Relations between $G$-Spaces}

\ArticleName{The Wehrheim--Woodward Category \\ of Linear Canonical Relations between $\boldsymbol{G}$-Spaces}

\Author{Alan WEINSTEIN~$^{\rm ab}$}

\AuthorNameForHeading{A.~Weinstein}

\Address{$^{\rm a)}$~Department of Mathematics, University of California, Berkeley, CA 94720, USA}
\EmailD{\href{mailto:alanw@math.berkeley.edu}{alanw@math.berkeley.edu}}
\Address{$^{\rm b)}$~Department of Mathematics, Stanford University, Stanford, CA 94305, USA}

\ArticleDates{Received August 18, 2024, in final form November 15, 2024; Published online November 18, 2024}

\Abstract{We extend the work in a previous paper with David Li-Bland to construct the Wehrheim--Woodward category WW($G\mathbf{SLREL}$) of equivariant linear canonical relations between linear symplectic $G$-spaces for a compact group $G$. When $G$ is the trivial group, this reduces to the previous result that the morphisms in WW($\mathbf{SLREL}$) may be identified with pairs $(L,k)$ consisting of a linear canonical relation and a nonnegative integer.}

\Keywords{symplectic vector space; canonical relation; rigid monoidal category; highly selective category}

\Classification{53D05; 18F99}

\section{Introduction}
The problem of making canonical relations between symplectic manifolds into the morphisms of a~category was solved in a seminal paper by Wehrheim and Woodward \cite{we-wo:functoriality}.
In a paper \cite{li-we:selective} by David Li-Bland and the author, we gave an abstract version of the Wehrheim--Woodward construction and showed that the morphisms in the WW-category of linear canonical relations between finite-dimensional symplectic vector spaces could be identified with the pairs $(L,k)$ consisting of a~linear canonical relation and a nonnegative integer, called {\em indexed canonical relations}.\looseness=-1

The main result of the present paper is an extension of this result to
the situation where a~compact group $G$ is acting on the linear symplectic spaces, and the linear canonical relations are equivariant. In this case, the nonnegative integer $k$ is replaced by an arbitrary isomorphism class $\cale$ of finite-dimensional linear $G$-spaces (without symplectic structure involved).

We recall the general setting of \cite{li-we:selective}, to which we will refer frequently.
The basic abstract idea was to select, within an underlying category $\boldc$, a collection of
nice morphisms, called {\it suave}, and a collection of composable pairs of suave morphisms, called {\it congenial}, such that every pair including an identity morphism is congenial, and such that the composition of a congenial pair is always suave. From such a {\it selective category} $\boldc$, we formed the category WW$(\boldc)$ generated by the suave morphisms, with relations given by the congenial compositions. For certain purposes, we also distinguished subcategories of the suave morphisms whose members are called {\it reductions} and {\it coreductions}. After the imposition of some further axioms, notably the requirement that each suave morphism be the composition of a reduction with a coreduction,
we obtained the definition of a {\it highly selective category}.

 We extended to all highly selective categories $\boldc$ the main result of \cite{we:note} to the effect that, in the special case where $\boldc$ is a certain highly selective category of relations between symplectic manifolds, with suave morphisms the smooth canonical relations, every morphism\footnote{We will usually denote morphisms in categories by arrows $X\from Y$ from right to left; the notation $\Hom(X,Y)$ will therefore denote morphisms {\em to} $X$ {\em from} $Y$.} $X \from Y$ in~WW$(\boldc)$ may be represented by a composition (not necessarily congenial)
$X \twoheadleftarrow Q \leftarrowtail Y$ of just two suave morphisms in $\calc$, where the decorations on the arrows mean that $X \twoheadleftarrow Q$ is a~reduction and $Q \leftarrowtail Y$ is a coreduction.

We paid special attention to rigid monoidal
structures, with monoidal products denoted $X\otimes Y$, dual objects $\xbar$, and
unit object $\unitob$. When a rigid monoidal structure is compatible with a selective structure on $\boldc$, it extends to WW$(\boldc)$.
In any rigid monoidal category, the morphisms from $\unitob$ play a special role. In fact, $\Hom(X,Y)$ can be identified with $\Hom\bigl(X \otimes \ybar, \unitob\bigr)$, with each morphism $X\from Y$ corresponding to its ``graph" $X\otimes \ybar\from\unitob$. In WW$(\boldc)$, therefore, each morphism may be represented as a composition $X\otimes \ybar \twoheadleftarrow Q \leftarrowtail \unitob$.

The morphisms in WW-categories built from categories of relations will be called {\it hyperrelations}. We will sometimes refer to a diagram $X\otimes \ybar \twoheadleftarrow Q \leftarrowtail \unitob$ in any category as a~{\it hypermorphism} to $X$ from $Y$. The composition
of hypermorphisms is essentially a monoidal product, which is
always defined without any extra assumptions, though one must remember the equivalence relation, for instance to see why an identity morphism really is an identity.

\section[Linear hypercanonical relations between symplectic G-spaces]{Linear hypercanonical relations between symplectic $\boldsymbol{G}$-spaces}
In this section, we analyze in detail the WW category built from the highly selective rigid monoidal category $\gslrel$
 of finite-dimensional symplectic $G$-vector spaces and equivariant linear canonical relations, where $G$ is a group. Beginning with Lemma \ref{lemma-complements}, we usually assume that $G$ is compact. This generalizes \cite[Section 7]{li-we:selective}, which deals with the special case $\slrel$, where $G$ is the trivial group.

The monoidal product in $\gslrel$ is the Cartesian product, so we will denote it by
$\times$ rather than $\otimes$. The unit object $\unitob$ is the zero-dimensional vector space whose only element is the empty~set.

 The dual $\xbar$ of $X$ is the same vector space, but with its symplectic structure multiplied by $-1$. The morphisms $X\from Y$ are the $G$-invariant Lagrangian subspaces of $X\times \ybar$.\footnote{Here, the identification of morphisms with their graphs is essentially tautological.} In particular, the morphisms $X\from \unitob$ are just the invariant Lagrangian subspaces of $X$; similarly, the morphisms $\unitob \from X$ are the invariant Lagrangian subspaces of $\xbar$, but these are the same as the invariant Lagrangian subspaces of $X$. Thus, the unit $\delta_X$ and counit $\epsilon_X$ are both given by the diagonal $\{(x,x) \mid x \in X\}$, an invariant Lagrangian subspace of $X \times \xbar$.

We will use the selective structure in which all morphisms are suave, but only the monic pairs are congenial.
We recall from \cite{li-we:selective} that monicity for relations \smash{$\xfy$} and \smash{$\ygz$} is defined as injectivity of the projection from $(f\times g )\cap \bigl( X \times \Delta_Y \times \zbar\bigr)$ to $X\times \zbar$. For linear relations, this is equivalent to injectivity over $0$, i.e., the condition
 ${(f \times g )\cap \bigl(\{0_X\} \times \Delta_Y \times \bigl\{0_\zbar\bigr\}\bigr) = \{0\}}$. By~elementary symplectic linear algebra, this is equivalent to $f \times g$ being transversal to ${X \times \Delta_Y \times \zbar}$, i.e., transversality of the composition. For the highly selective structure, we define the reductions to be those morphisms which are surjective and single valued and the coreductions those which are injective and everywhere defined, just as in $\slrel$. (In $\gslrel$, the domain of a~reduction and the codomain of reduction are invariant in addition to being coisotropic.)

\begin{prop}
The category $\gslrel$ with the structures described above is a highly selective rigid monoidal category.
\end{prop}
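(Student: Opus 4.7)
The plan is to run the argument given for $\slrel$ in \cite[Section 7]{li-we:selective} step by step, checking at each stage that the $G$-action is preserved. Because the relevant operations---composition of relations, symplectic orthogonals, and the canonical factorizations---are intrinsic operations on Lagrangian subspaces, they automatically commute with the $G$-action, so equivariance adds no real content; in particular, I do not expect to use compactness of $G$ anywhere in this proposition. First I would check that $\gslrel$ is a category: the composition of two equivariant linear canonical relations is again an equivariant Lagrangian, since linear symplectic composition always yields a canonical relation and $G$-invariance passes to the output. Then I would check the rigid monoidal structure, which is routine: the Cartesian product with the diagonal action is the monoidal product, $\unitob$ is the zero-dimensional trivial $G$-space, duals are obtained by negating the symplectic form, and the diagonals give the unit and counit; the snake identities reduce to the nonequivariant case by forgetting the $G$-action.

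Next I would verify the selective axioms. Every morphism is suave by definition, so the content reduces to showing that pairs containing an identity are monic and that the composition of a monic pair is suave. The first follows by direct inspection of the intersection $(f \times \mathrm{id}_Y) \cap (X \times \Delta_Y \times \ybar)$ (and the symmetric case), and the second is trivial. The monoidal product is compatible with the selective structure because the injectivity condition defining monicity is preserved under Cartesian products. I would then check that reductions (surjective single-valued equivariant relations) and coreductions (injective everywhere-defined equivariant relations) form subcategories; each defining property is preserved by the composition of relations, and equivariance passes through.

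The heart of the argument is the highly-selective factorization axiom: every equivariant $L : X \from Y$ must be written as $X \twoheadleftarrow Q \leftarrowtail Y$ with both arrows equivariant. I plan to reuse the construction from \cite[Section 7]{li-we:selective}, whose intermediate space $Q$ is built from $L$, $X$, $Y$ by canonical symplectic operations, so that $Q$ inherits a $G$-action making both arrows equivariant. This is the step where one might initially expect to need a $G$-invariant direct-sum complement, and hence compactness of $G$ via averaging; but since the factorization uses no such choice, the apparent obstacle dissolves and the proposition holds for arbitrary $G$.
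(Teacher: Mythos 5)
Your proposal is correct and follows essentially the same route as the paper: everything reduces to the corresponding verifications for $\rel$ and $\slrel$ in \cite{li-we:selective}, with equivariance coming for free because all the constructions are canonical, and the factorization axiom handled by the (choice-free, hence equivariant) graph factorization that the paper obtains by citing \cite[Proposition~4.12]{li-we:selective}. Your observation that compactness of $G$ is not needed here is also consistent with the paper, which only imposes that hypothesis starting with Lemma~\ref{lemma-complements}.
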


\begin{proof}
Most of the required properties follow from those of $\rel$, as demonstrated in \cite[Exam\-ples~2.4,~3.2,~4.1 and~4.10]{li-we:selective}. Morphisms to and from the zero vector space are clearly reductions and coreductions respectively, and application of
\cite[Proposition 4.12]{li-we:selective} gives the factorization of suave morphisms.
\end{proof}

Since the congenial pairs in $\gslrel$ are monic, there is a good notion of trajectories for the WW morphisms. Given $[f]$, the equivalence class in ${\rm WW}(\gslrel)$ of a sequence $f_i$ of morphisms in $\gslrel$, if $(x,y)$ is in the shadow of $[f]$, the set theoretic composition of the $f_i$, the set of trajectories to $x$ from $y$ is an affine $G$-space modeled on the vector space of trajectories to $0$ from $0$, which is the kernel of a projection from a fibre product.
That the structure of this kernel as a linear $G$-space is an invariant is a consequence of the invariance of spaces of trajectories, with the linear $G$-structure taken into account. This justifies the following definition.

\begin{dfn}
Let \smash{$\wwxfy$} be a morphism in ${\rm WW}(\gslrel)$. The {\it excess} $\cale([f])$ of $[f]$ is the isomorphism class of the affine $G$-space of trajectories between any two points in~$X$ and~$Y$.
\end{dfn}

We omit the easy proof of the following.
\begin{prop}
A pair $(f,g)$ is congenial if and only if the excess of $[fg]$ is a point. More generally, a~WW morphism is represented by a single linear canonical relation if and only if its excess is a~point.
\end{prop}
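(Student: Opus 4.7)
The plan is to unpack the definitions and reduce the general statement to the case of a composable pair, where congeniality was essentially tailored to produce exactly this characterization.

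First I would identify the trajectories from $0$ to $0$ for a composable pair $\xfy$, $\ygz$. By definition, such a trajectory is a point $(y_0, y_1, y_2) \in X \times Y \times \zbar$ with $(y_0, y_1) \in f$, $(y_1, y_2) \in g$, and $y_0 = 0$, $y_2 = 0$; equivalently, it is an element of the intersection $(f \times g) \cap \bigl(\{0_X\} \times \Delta_Y \times \bigl\{0_\zbar\bigr\}\bigr)$. The monicity criterion recalled just before the Proposition says exactly that this intersection is $\{0\}$, and monicity is the congeniality condition for the selective structure on $\gslrel$. So the first assertion is immediate: the pair is congenial iff the excess of $[fg]$ is a point.

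For the second assertion, one direction is essentially free: if $[f]$ is represented by a single linear canonical relation $f_0$, then the trajectory from $0$ to $0$ is uniquely the point $0 \in f_0$, and the affine $G$-space of trajectories between any pair of points in the shadow is a singleton. For the converse, I would invoke the factorization result described in the introduction, which gives a representative $X \twoheadleftarrow Q \leftarrowtail Y$ for $[f]$ as a reduction followed by a coreduction. The excess of this two-term representative coincides with $\cale([f])$ by well-definedness of the excess on WW equivalence classes (the invariance statement made just before the definition). If $\cale([f])$ is a point, then by the pair case already proved this reduction-coreduction pair is monic and hence congenial, so the WW composition equals the ordinary set-theoretic composition $Q \circ Y \hookrightarrow X$, which is a single linear canonical relation in $\gslrel$.

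The only real step with any content is confirming that the excess is genuinely an invariant of the WW class (so that any representative, and in particular the reduction-coreduction factorization, may be used to compute it); but this is precisely the observation preceding the definition of excess, where the trajectory space of a WW morphism is identified with the $G$-invariant kernel of a projection from a fibre product. Everything else is bookkeeping around the monicity formulation of congeniality.
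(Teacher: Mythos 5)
The paper explicitly omits the proof of this proposition ("We omit the easy proof of the following"), so there is no argument to compare against; your proposal is correct and supplies exactly the intended reasoning, namely that the trajectory space of a pair is the intersection appearing in the monicity criterion, together with the reduction--coreduction factorization and the invariance of the excess for the general case. No gaps.
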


To classify all of the WW morphisms, it is useful to begin with the case $Y = \unitob$.
A {\it hyper-Lagrangian subspace} of the symplectic $G$-vector space $X$ is defined as a WW morphism $X\from \unitob$. It~is~represented by (equivariant) diagrams of the form
\[
X \stackrel {C}{\twoheadleftarrow}Q\stackrel{L}{\leftarrowtail}\unitob ,
\]
 where $C$ is a reduction whose domain, an invariant coisotropic subspace, will also be denoted by $C$, and $L$ is invariant and Lagrangian in $Q$. We may therefore denote the hyper-Lagrangian subspace by $[C,L]$. The set-theoretic composition $CL$, an invariant Lagrangian subspace of $X$, is~the~shadow of $[C,L]$ and is therefore well defined. The excess of $[C,L]$ is the isomorphism class of $C\cap L$ as a linear $G$-space or, equivalently, that of $Q/(C+L)$.

The following lemma may be well known, but we give a proof here for completeness. (Beginning here, we will usually assume that the group $G$ is compact.)

\begin{lem}
\label{lemma-complements}
Suppose that a compact group $G$ acts linearly on a vector space $V$. Let $L$ and $J$ be invariant subspaces such that $L\cap J = \{0\}$. Then $J$ is contained in an invariant complement~$M$ to~$L$. $($In particular, if $J=\{0\}$, this shows that every invariant subspace has an invariant complement$.)$ Furthermore, if $V$ is a symplectic $G$-space, $L$ is Lagrangian, and $J$ is isotropic, then $M$ can be chosen to be Lagrangian.
\end{lem}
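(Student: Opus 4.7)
The plan splits naturally into two parts, corresponding to the two assertions of the lemma.

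For the first (general linear) part, I would invoke the standard averaging trick afforded by compactness of $G$: averaging an arbitrary inner product on $V$ over $G$ with respect to Haar measure yields a $G$-invariant inner product, and orthogonal complementation then produces an invariant complement to any invariant subspace. This already handles the parenthetical case $J = \{0\}$. For general invariant $J$ with $J \cap L = \{0\}$, apply this observation to the invariant subspace $L \oplus J$, obtain an invariant complement $N$ so that $V = L \oplus J \oplus N$, and set $M := J \oplus N$.

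For the Lagrangian statement, start with an invariant (but not necessarily Lagrangian) complement $M_0$ of $L$ containing $J$, as produced above. I would deform $M_0$ into a Lagrangian $M$ by writing $M$ as the graph of a $G$-equivariant linear map $\phi \colon M_0 \to L$, namely $M := \{m + \phi(m) \mid m \in M_0\}$. Because $L$ is Lagrangian and $M_0$ complements it, the symplectic form $\omega$ restricts to a $G$-equivariant non-degenerate pairing between $L$ and $M_0$, and $\phi$ corresponds to the bilinear form $\tilde\phi(m_1, m_2) := \omega(\phi(m_1), m_2)$ on $M_0$. Expanding the Lagrangian condition on $M$ shows that only the skew-symmetric part of $\tilde\phi$ is constrained, and must equal $-\beta/2$, where $\beta := \omega|_{M_0 \times M_0}$, while the symmetric part stays free. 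The containment $J \subseteq M$ translates to $\phi|_J = 0$, equivalently to $\tilde\phi_{\mathrm{sym}}(j,m) = \beta(j,m)/2$ for all $j \in J$ and $m \in M_0$.

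This is precisely where the isotropy of $J$ enters: on $J \times J$ the required value $\beta(j_1,j_2)/2$ vanishes, which is consistent with the symmetry of $\tilde\phi_{\mathrm{sym}}$ (indeed, a non-isotropic $J$ could never lie in any Lagrangian at all). I would then apply the first part of the lemma once more to split $M_0 = J \oplus N'$ $G$-invariantly, and define $\tilde\phi_{\mathrm{sym}}$ to equal $\beta(j,n)/2$ on $J \times N'$ (extended symmetrically to $N' \times J$), and to vanish on $J \times J$ and $N' \times N'$. Since $J$, $N'$, and $\beta$ are all $G$-invariant, so is $\tilde\phi$; hence $\phi$ is $G$-equivariant and $M$ is $G$-invariant. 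The chief obstacle I anticipate is reconciling the three requirements — Lagrangian, containing $J$, and $G$-invariant — simultaneously; the symmetric/skew decomposition of $\tilde\phi$ together with the iterated use of the invariant-complement construction is what makes these compatible.
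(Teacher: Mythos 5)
Your proof is correct, but it reaches the conclusion by a genuinely different route from the paper's. For the first assertion the paper does not use an invariant inner product: it parametrizes the complements of $L$ by linear maps $M'\to L$ via the bijection $\lambda$ and averages $\lambda(g\cdot M')$ over $G$ as in equation~\eqref{eq-average}; the containment $J\subseteq M$ then follows because each $\lambda(g\cdot M')$ vanishes on the invariant subspace $J$. Your averaged-inner-product argument applied to $L\oplus J$ is an equally standard, arguably more familiar, way to get the same conclusion. The divergence is more substantial in the symplectic case: the paper first builds a possibly \emph{non-invariant} Lagrangian complement containing $J$ (identifying a Lagrangian complement of $L$ with $L^*$, so that Lagrangian complements correspond to symmetric maps $L\to L^*$ and $J$ to a symmetric map on a subspace $K\subseteq L$, which is then extended) and only afterwards averages, using that an average of symmetric maps is symmetric; you reverse the order, first securing an \emph{invariant} complement $M_0\supseteq J$ and then correcting it to a Lagrangian one by an explicitly constructed equivariant graph map $\phi\colon M_0\to L$. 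Your computation is right: the Lagrangian condition forces $\tilde\phi_{\mathrm{skew}}=-\beta/2$, the containment $J\subseteq M$ forces $\tilde\phi_{\mathrm{sym}}(j,\cdot)=\beta(j,\cdot)/2$, the consistency of the latter with symmetry on $J\times J$ is exactly the isotropy of $J$, and your block definition of $\tilde\phi_{\mathrm{sym}}$ relative to the invariant splitting $M_0=J\oplus N'$ is symmetric, satisfies the constraint (using $\beta|_{J\times J}=0$), and is $G$-invariant. What your route buys is that the role of isotropy is made completely explicit and the paper's unproved (though easy) extension step for the symmetric map $r$ is avoided; what the paper's route buys is brevity, since the single averaging formula handles invariance and the containment of $J$ simultaneously.
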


\begin{proof}
By elementary linear algebra, $J$ is contained in a complement $M'$ to $L$, which may or not be invariant. To make it invariant, we may average over $G$. In fact, each complement~$N$ to~$L$ in $V\cong L\oplus M'$ is the graph of a linear map $\lambda(N)$ from $M'$ to $L$, and the map $\lambda$ is a bijection between the linear maps $M'\to L$ and the complements to $L$. Now we may define $M$ to be
\begin{equation}
\label{eq-average}
\lambda^{-1}\int_G \lambda\bigl(g\cdot M'\bigr) {\rm d}g,
\end{equation}
where ${\rm d}g$ is the left-invariant measure on $G$ with total measure 1.
The fact that $J$ is invariant and contained in $M'$ implies that $J$ is also contained in each $g\cdot M'$, so that each $\lambda(g\cdot M')$
in equation~\eqref{eq-average} vanishes on $J$. It follows that the integral also vanishes, so that $J$ is contained in~$M$.

For the symplectic case, we can assume that the complement $M'$ to $L$ is also Lagrangian. To see this, start with a Lagrangian complement $M''$ to $L$ which does not necessarily contain~$J$. Like any Lagrangian complement, $M''$ is naturally isomorphic to $L^*$. Via this isomorphism, $\lambda$~(defined as was done above for $M'$) maps the \textit{Lagrangian} complements of $L$ onto the \textit{symmetric} maps $L\to L^*$. What about \textit{isotropic} subspaces $J$ for which $L\cap J = \{0\}$? Each such subspace corresponds to the image of a map $r$ to
$L^*$ from a subspace $K$ of $L$ which is symmetric in the sense that $(r(x))(y)=(r(y))(x)$ for all $x$* and $y$ in $K$. It is easy to see that such a map $r$ can be extended to a symmetric map from $L$ to $L^*$ which corresponds to the desired Lagrangian extension $M'$ to $J$.

Since the action of $G$ on $V$ is symplectic, each $g\cdot M'$ is also Lagrangian, so the integral in~\eqref{eq-average} is symmetric, and $M$ is Lagrangian as well.
\end{proof}

\begin{lem}
\label{lemma-isotropiclagrangian}
When $G$ is compact, a symplectic $G$-vector space $V$,
an invariant Lagrangian subspace $L$, and an invariant isotropic subspace $I$ are characterized up to equivariant symplectomorphism by the isomorphism classes of the linear $G$-spaces $L$, $I$ and $L\cap I$.
\end{lem}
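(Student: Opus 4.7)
The plan is to construct a standard invariant decomposition of $V$ whose pieces are determined by the isomorphism classes of $L$, $I$, and $K := L \cap I$, and then to match two such decompositions.

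Applying Lemma \ref{lemma-complements} repeatedly produces invariant complements $K'$ to $K$ in $L$ and $J$ to $K$ in $I$, so that $L = K \oplus K'$ and $I = K \oplus J$. Since $J \subset I$ is isotropic and $L \cap J = K \cap J = \{0\}$, the symplectic part of the lemma yields an invariant Lagrangian complement $M$ to $L$ containing $J$. The symplectic form identifies $M$ equivariantly with $L^* = K^* \oplus (K')^*$, sending the annihilator of $K$ to $(K')^*$; the isotropy of $I$ then forces $J \subset (K')^*$. Thus $V = L \oplus M$ carries the canonical invariant data $(K, K', J)$ with $L = K \oplus K'$, $I = K \oplus J$, and $J \subset (K')^*$.

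Given another triple $(V_2, L_2, I_2)$ with the same isomorphism hypotheses, perform the analogous construction to obtain $K_2, K_2', J_2, M_2$. By uniqueness of isotypic decomposition for compact $G$, cancellation in the representation ring gives $[K'] = [K_2']$ and $[J] = [J_2]$. Choose an equivariant isomorphism $A \colon L \to L_2$ with $A(K) = K_2$ and $A(K') = K_2'$; then its transpose $A^* \colon L_2^* \to L^*$ maps $(K_2')^*$ onto $(K')^*$, and $A^*(J_2)$ is an invariant subspace of $(K')^*$ of the same $G$-isomorphism class as $J$.

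The essential remaining step is the following general fact: \emph{any two invariant subspaces of a compact $G$-representation $W$ which are isomorphic as $G$-spaces are related by an equivariant automorphism of $W$.} This is immediate from isotypic decomposition, since each isotypic component has the form $V_\pi \otimes \mathrm{Mult}_\pi$ and $GL(\mathrm{Mult}_\pi)$ acts transitively on subspaces of fixed dimension. Applying this inside $(K')^*$ to exchange $A^*(J_2)$ and $J$, and leaving the $K^*$-summand of $L_2^*$ untouched, one obtains a modified equivariant isomorphism $\tilde A^* \colon L_2^* \to L^*$ with $\tilde A^*(J_2) = J$. Its dual $\tilde A \colon L \to L_2$ still carries $K$ to $K_2$, so $\tilde A \oplus (\tilde A^*)^{-1} \colon L \oplus M \to L_2 \oplus M_2$ is an equivariant symplectomorphism sending $L$ to $L_2$ and $I = K \oplus J$ to $K_2 \oplus J_2 = I_2$. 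The main obstacle is precisely the ambient-automorphism fact just highlighted; the rest is bookkeeping with Lemma \ref{lemma-complements}.
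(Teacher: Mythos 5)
Your proposal is correct and follows essentially the same route as the paper: both reduce to $V \cong L \oplus L^*$ via an invariant Lagrangian complement containing $J$, and both use the isotropy of $I$ to place $J$ inside the annihilator of $L\cap I$. The only difference is cosmetic: the paper finishes by splitting off an invariant complement $R$ to $J$ and exhibiting an explicit direct-sum normal form, whereas you finish by matching two such decompositions with an equivariant ambient automorphism (where, over $\reals$, one should note that invariant subspaces of an isotypic component correspond to submodules over the relevant division algebra, so transitivity is of $GL$ over that algebra --- or, more simply, follows from cancellation plus invariant complements).
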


\begin{proof}
 Choose invariant complements to write $I = (I\cap L) \oplus J$ and $L = (I\cap L)\oplus K$. Then~${J\cap L = \{0\}}$, and $J$ is isotropic, so, by Lemma~\ref{lemma-complements}, $J$ can be extended to an invariant Lagrangian complement $M$ of $L$ in $V$. We may identify $M$ with $L^*$ and hence identify $V$ with the direct sum~${L\oplus L^*}$ with the usual symplectic structure. The decomposition of $L$ gives, via the orthogonality relation $^o$ between subspaces of $L$ and those of $L^*$, a dual decomposition
$L^* = K^o \oplus (I\cap L)^o$, which is naturally isomorphic to $(I\cap L)^* \oplus K^*$. Since $J\subset L^*$ is symplectically orthogonal to~${I\cap L}$, it must be contained in the summand $K^*$.

Now choose an invariant complement $R$ to $J$ in $K^*$. Since $K^*=J\oplus R$, we have $K=J^*\oplus R^*$. This gives $L=(I\cap L)\oplus J^* \oplus R^*$, and hence $L^*=(I\cap L)^* \oplus J \oplus R$. Thus, $V=L\oplus L^*$ can be written as $(I\cap L)\oplus(I\cap L)^*\oplus J^* \oplus J \oplus R^* \oplus R$, in which $I=(I\cap L)\oplus J$. Since $J$ is isomorphic to $I/(I\cap L)$ and $R$ to $L/(I\cap L)$, the triple $(V,L,I)$ is determined up to symplectic $G$-isomorphism by the $G$-isomorphism types of $L$, $I$ and $L\cap I$.
\end{proof}

\begin{prop}
When the group $G$ is compact, two hyper-Lagrangian subspaces are equal if they have the same shadow and the same
excess.
\end{prop}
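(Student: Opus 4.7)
The plan is to show that every hyper-Lagrangian with shadow $S \subset X$ and excess $\cale$ is WW-equivalent to a canonical representative determined by $S$ and $\cale$ alone; the proposition then follows. The argument first reduces an arbitrary representative to a ``minimal'' form in which $L \subset C$, and then classifies minimal forms using Lemma~\ref{lemma-isotropiclagrangian}.

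For the first step, given $X \stackrel{C}{\twoheadleftarrow} Q \stackrel{L}{\leftarrowtail} \unitob$, Lemma~\ref{lemma-complements} produces an invariant complement $M$ of $L \cap C$ in $L$, so $M \subset L$ is isotropic and $M \cap C = \{0\}$. Pass to the symplectic reduction $Q' = M^\perp/M$ and introduce the Lagrangian relation $\rho = \{(q,[q]) : q \in M^\perp\} \subset Q \times \overline{Q'}$, viewed as a coreduction $Q \leftarrowtail Q'$. The three-term sequence $X \stackrel{C}{\twoheadleftarrow} Q \stackrel{\rho}{\leftarrowtail} Q' \stackrel{L/M}{\leftarrowtail} \unitob$ has both adjacent pairs congenial: $(\rho, L/M)$ trivially, and $(C, \rho)$ because $C^\perp \cap M \subset C \cap M = \{0\}$. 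Collapsing the right-hand pair restores the original $(C, L)$ (since $M \subset L \subset M^\perp$ gives $\rho \circ (L/M) = L$), while collapsing the left-hand pair yields a new reduction $C' : X \twoheadleftarrow Q'$ which automatically satisfies $L/M \subset C'$. Hence $[C, L]$ is represented by the minimal pair $(C', L/M)$ in $Q'$.

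When $L \subset C$, a dimension count gives $\dim Q = \dim X + 2\dim \cale$ and $\dim C^\perp = \dim \cale = \dim(L \cap C^\perp)$, whence $C^\perp \subset L$. Lemma~\ref{lemma-isotropiclagrangian} then classifies the triple $(Q, L, C^\perp)$ up to equivariant symplectomorphism by the isomorphism classes $[L]$, $[C^\perp]$, and $[L \cap C^\perp]$; invoking Lemma~\ref{lemma-complements} on the inclusion $C^\perp \subset L$ with $L/C^\perp \cong S$ identifies these classes as $[S \oplus \cale]$, $\cale$, and $\cale$, all determined by $S$ and $\cale$. Consequently, any two minimal representatives are related by an equivariant symplectomorphism $\phi$ sending $(L_1, C_1^\perp)$ to $(L_2, C_2^\perp)$, and hence $C_1 = (C_1^\perp)^\perp$ to $C_2$.

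The induced symplectomorphism $\phi_\ast$ of $X$ preserves $S$ but need not be the identity. In the explicit normal form $Q_0 = X \times (\cale \oplus \cale^*)$ with $C_0 = X \times (\cale \oplus 0)$, $L_0 = S \times (\cale \oplus 0)$, any equivariant symplectomorphism $\psi$ of $X$ preserving $S$ lifts to $\psi \times \mathrm{id}$, so, after transferring via the isomorphism $Q_i \cong Q_0$, composing $\phi$ with the lift of $\phi_\ast^{-1}$ arranges $\phi_\ast = \mathrm{id}_X$. Inserting this $\phi$ between the two minimal 2-sequences produces a 3-sequence whose two congenial collapses recover the two sides, establishing $[C_1, L_1] = [C_2, L_2]$ in ${\rm WW}(\gslrel)$. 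The main obstacle is the first step: one must verify that $\rho$ is a Lagrangian equivariant relation, that the two congeniality conditions hold, and that the compositions come out as claimed, all of which is a careful but elementary symplectic-linear-algebra computation.
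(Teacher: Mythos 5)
Your argument is correct, and it takes a genuinely different route from the paper's. Both proofs ultimately rest on Lemma~\ref{lemma-isotropiclagrangian} applied to the triple $(Q,L,C^\perp)$ and on converting an isomorphism of diagrams into an equality of WW morphisms, but they get to that point differently. The paper pads the minimal normal form $\langle C_{\calk,0},\Lambda_{\calk,0}\rangle$ in $X\times(E\oplus E^*)$ by the transversal pair $\bigl(\reals^r,{\reals^r}^*\bigr)$ until the intermediate space has the dimension $2N$ of the given representative and asserts that the two diagrams are then symplectically isomorphic. You instead first shrink the given representative by an explicit WW move --- reducing $Q$ by an invariant isotropic complement $M$ of $L\cap C$ in $L$, so that the two congenial collapses of the three-term diagram return $(C,L)$ on one side and a minimal pair with $L\subset C$ on the other --- and only then classify minimal representatives. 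This buys something real: for nontrivial $G$ a representative of intermediate dimension $2N$ need not be isomorphic to the trivially padded normal form (one can pad instead by $\bigl(\Lambda',{\Lambda'}^{*}\bigr)$ for a nontrivial representation $\Lambda'$, which changes the isomorphism classes of $L$ and of $C^\perp$ while fixing the shadow, the excess and $N$), so the paper's intermediate claim needs exactly the kind of reduction-to-minimal-form step you supply. You are also more explicit on a point the paper delegates to \cite[Proposition~5.1]{li-we:selective}: the symplectomorphism produced by Lemma~\ref{lemma-isotropiclagrangian} need not induce the identity on $X=C/C^\perp$, and your correction by the lift $\phi_*^{-1}\times\mathrm{id}$ (legitimate because the induced map preserves the shadow) is needed before the final insertion argument. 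The elementary verifications you defer --- that $\rho$ is an equivariant Lagrangian coreduction, that both adjacent pairs are monic, and that the two collapses are as claimed --- all go through.
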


\begin{proof}
Let $\Lambda$ be an invariant Lagrangian subspace of $X$, $\calk$ an isomorphism class of $G$-vector spaces, and $E$ a representative of $\calk$. We will construct a normal form which is equivalent to any representative of a hyper-Lagrangian subspace with
shadow $\Lambda$ and excess $\calk$. In $X \times (E \oplus E^*)$, let $C_{\calk,0}= X \times E$ and $\Lambda_{\calk,0} = \Lambda \times E$.
 The hyper-Lagrangian subspace $\langle C_{\calk,0}, \Lambda_{\calk,0}\rangle$ has shadow $\Lambda$ and excess $\calk$. This is a minimal representative in its equivalence class; we get larger representatives by forming its monoidal product with the trivial hyper-Lagrangian subspace of the point $\reals^0$, as represented as the transversal pair $(\reals^r,{\reals^r}^*)$ with intermediate space $\reals^r \times{\reals^r}^*$. Denote this product by $\langle C_{\calk,r}, \Lambda_{\calk,r}\rangle$.

Now let $\langle C,L \rangle$ be any hyper-Lagrangian subspace of $X$ with shadow $\Lambda$, excess $\calk$, and intermediate space $Q$ of dimension $2N$. If $X$ has dimension $2n$, then the dimension of $C$ must be~${N+n}$. We observe first that the diagram
\[
X \stackrel {C}{\twoheadleftarrow}Q\stackrel{L}{\leftarrowtail} \mathbf 1
\]
 is symplectically isomorphic to
\[
X \stackrel {C_{\calk,r}}{\twoheadleftarrow}X\times E \times E^*\times\reals^r \times{\reals^r}^*\stackrel{\Lambda_{\calk,r}}{\leftarrowtail} \mathbf 1 ,
\]
 i.e., there is a symplectomorphism of $Q$ with $X \times E \times E^* \times \reals^r \times{\reals^r}^*$ with $r=N-n-k$, taking $L$ to~$\Lambda \times \reals^k \times{ \reals^r}^*$ and $C$ to $X\times \reals^k\times {\reals^r}$. The isomorphism now follows from Lemma~\ref{lemma-isotropiclagrangian} above.
That $\langle C,L\rangle$ and $\langle C_{\calk,r}, \Lambda_{\calk ,r}\rangle$ are equal as WW-morphisms now follows from \cite[Proposition~5.1]{li-we:selective}.
\end{proof}

Thus, there is a bijective correspondence between hyper-Lagrangian subspaces of $X$ and pairs $(L,\calk)$, where $L$ is an ordinary Lagrangian subspace and $\calk$ is an isomorphism class of linear $G$-spaces.

We can now understand general WW morphisms via their graphs. We will call any morphism
\smash{$\wwxfy$} in ${\rm WW}(\gslrel)$ a ({\it linear}) {\it hypercanonical relation} to $X$ from $Y$. Its graph is a~hyper-Lagrangian subspace of $X\times \ybar$. These have the following two useful properties, which hold even if $G$ is not compact.

\begin{prop}\quad
\begin{enumerate}\itemsep=0pt
\item[$(1)$] The excess of any hypercanonical relation is equal to that of its graph.

\item[$(2)$] If $[f]$ and $[g]$ are composable hypercanonical relations,
\[
\cale([f][g]) = \cale([f]) \oplus \cale([g]) \oplus \cale([c([f]),(c[g])]).
\]
\end{enumerate}
\end{prop}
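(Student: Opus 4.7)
Both parts reduce to linear-algebra bookkeeping on an explicit zigzag representative, combined with the equivariant splitting provided by Lemma~\ref{lemma-complements}.

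For (1), I would fix a zigzag representative $X \from Q_1 \from \cdots \from Q_n \from Y$ of $[f]$ and build a representative of its graph by taking products with $\ybar$ throughout and appending the diagonal unit $Y \times \ybar \from \unitob$:
\[
X \times \ybar \from Q_1 \times \ybar \from \cdots \from Q_n \times \ybar \from Y \times \ybar \from \unitob.
\]
A trajectory of this extended sequence landing at $(x,y) \in X \times \ybar$ is determined by, and determines, a trajectory of $[f]$ to $x$ from $y$, simply by reading off the first factors of the intermediate points (the second factors are forced to be $y$ throughout by the product structure, and the final diagonal step forces the match). Specializing to $(x,y) = (0,0)$ gives a $G$-equivariant linear isomorphism between the two excess spaces.

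For (2), I would concatenate zigzag representatives of $[f]$ and $[g]$ to obtain a representative of the composite $[f][g]$, and observe that a trajectory of this sequence to $0$ from $0$ is a triple $(\tau, y, \sigma)$ with $\tau$ a trajectory of $[f]$ to $0$ from $y$, $\sigma$ a trajectory of $[g]$ to $y$ from $0$, and $y$ a point of
\[
V := \{y \in Y : (0,y) \in c([f]) \text{ and } (y,0) \in c([g])\}.
\]
The $G$-equivariant projection $(\tau,y,\sigma) \mapsto y$ then fits into a short exact sequence of linear $G$-spaces
\[
0 \to \cale([f]) \oplus \cale([g]) \to \cale([f][g]) \to V \to 0,
\]
whose kernel (computed by setting $y=0$) is exactly the product of the excesses of $[f]$ and $[g]$. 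The transversality-based characterization of monicity recalled in Section~2 identifies $V$ with the kernel over $(0,0)$ of the projection from $\bigl(c([f]) \times c([g])\bigr) \cap \bigl(X \times \Delta_Y \times \zbar\bigr)$ to $X \times \zbar$, and this kernel is exactly $\cale([c([f]), c([g])])$ by the definition of excess applied to the two-step WW morphism $[c([f]), c([g])]$. Finally, Lemma~\ref{lemma-complements} (applied with $J = 0$, which is just Maschke's theorem for compact $G$) splits the sequence equivariantly, yielding the asserted decomposition.

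\textbf{Expected obstacle.} The bulk of (1) is a transparent identification; the care in (2) is ensuring that the bijection between trajectories and triples, and the identification of $V$ with the excess $\cale([c([f]), c([g])])$ of the composed shadows, are genuinely canonical and $G$-equivariant at the level of linear $G$-modules rather than merely bijections of sets. Once those naturality points are in hand, the direct-sum decomposition is forced by the equivariant splitting of the short exact sequence, which is the one place compactness of $G$ is used.
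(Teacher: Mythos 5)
Your proof follows essentially the same route as the paper's: part (1) is the same identification of trajectory spaces obtained by tensoring a representative of $[f]$ with $\ybar$ and appending the unit $Y\times\ybar\from\unitob$, and part (2) is the same factorization of the trajectory space of the composite through the fibre product of the shadows over the intermediate point $y$, with the two kernels being $\cale([f])\oplus\cale([g])$ and $\cale([c([f]),c([g])])$. The only divergence is that you explicitly invoke compactness of $G$ to split the resulting short exact sequence of linear $G$-spaces, whereas the paper states that the proposition holds even for non-compact $G$ and leaves the splitting step implicit; your treatment is the more careful one on that point.
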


\begin{proof}
Given a morphism $[f]$ represented by \smash{$X \stackrel{a}{\twoheadleftarrow} Q \stackrel{b}{\leftarrowtail} Y$}, its excess is the isomorphism class of the trajectory $G$-space
\[
\smash{(a \times b)\cap (\{0_X\}\times \Delta_Q\times
\{0_Y\}) \subseteq X\times Q\times \qbar\times \ybar}.
\]

On the other hand, the graph of $[f]$ is represented by
\[
X\times \ybar \stackrel{a \times 1_\ybar}{\twoheadleftarrow} Q \times \ybar
 \stackrel{\gamma_b}{\leftarrowtail} \unitob.
\]
The excess of the latter is the isomorphism class of
\[
\bigl(a\times 1_\ybar \times \gamma_b\bigr) \cap\bigl({0_{X\times \ybar}} \times \Delta_{\qbar \times Y}
 \times\{0_\unitob\}\bigr)\subseteq X \times \ybar\times \qbar\times Y \times Q\times\ybar \times \unitob.
\]
This intersection consists of the sextuples $(x,y,q',y',q'',y'')$ such that
$x=0_X$, $y=0_Y$, $y=y'=y''$, $q'=q''$, $(x,q')\in a$, and $(q'',y'')\in b$. These may be
identified with the trajectories to $0_X$ from $0_Y$ in $[a,b]=[f]$.

Note that the graph of $[f]$ is also represented by the ``graph product"
\[
X\times \ybar \twoheadleftarrow X \times \qbar \times Q \times \ybar
\stackrel{\gamma_a \times \gamma_b}{\leftarrowtail} \unitob\times\unitob=\unitob,
\]
whose trajectory space is essentially the same as that of $[ab]$.

For (2), we begin with the fact that $\calt([f,g])$ is the fibre product over $Y$ of $\calt([f])$ and $\calt([g])$. The projection $\tau([f,g])$ may therefore be factored as
\[
c([fg])\stackrel{}{\twoheadleftarrow} c([f])\times_Y c([g]) \stackrel{}{\leftarrowtail} \calt([f,g]).
\]
 The kernel of the map to $c([fg])$ has isomorphism class
$\cale([c([f]),(c[g])])$, while the kernel of the map from $\calt([f,g])$ has isomorphism class
$\cale([f]) + \cale([g])$.
\end{proof}

We may therefore identify the WW morphisms (i.e., hypercanonical relations) $X\from Y$ with the pairs $(f,\calk)$, where $f$ is a Lagrangian subspace of $X \times \ybar$, and $\calk$ is an isomorphism class of linear $G$-spaces. To be consistent with the case of the trivial group $G$ studied in \cite{li-we:selective}, we shall continue to call these pairs {\it indexed canonical relations}.

The following theorem expresses the structure of ${\rm WW}(\gslrel)$ for compact $G$ via the identification
with indexed canonical relations. The proof consists of elementary calculations.

\begin{thm}
The indexed $($linear$)$ canonical relations form a category by identification with the linear hyperrelations. The composition law is
\[
\bigl(f',\calk'\bigr)\bigl(f'',\calk''\bigr) = \bigl(f'f'',\calk'+\calk''+\cale\bigl(\big[f',f''\big]\bigr)\bigr),
\]
 and the monoidal product is
\[
\bigl(f',\calk'\bigr)\times\bigl(f'',\calk''\bigr) = \bigl(f'\times f'',\calk'\oplus \calk''\bigr).
\]
The monoid of endomorphisms of the unit object is naturally identified with the set of isomorphism classes of finite-dimensional $G$-spaces with the operation of direct sum, and its action on the category is the shifting operation $\calk'\cdot (f,\calk)= (f,\calk'\oplus\calk)$.
 The trace of an endomorphism~$(f,\calk)$ of $X$ is
the direct sum of $\calk$ with the isomorphism class of the fixed point space of~$f$, whose diagonal is $ (\gamma_f \cap \Delta_X)$.
\end{thm}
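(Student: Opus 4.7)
The strategy is to transport the categorical structure of ${\rm WW}(\gslrel)$ through the bijection $[f]\leftrightarrow(c([f]),\cale([f]))$ established in the preceding discussion; each of the four assertions then reduces to computing the shadow and the excess of a specific composition of hyperrelations, using results already in hand. For the composition law, I take representatives $X\twoheadleftarrow Q'\leftarrowtail Y$ and $Y\twoheadleftarrow Q''\leftarrowtail Z$; the shadow of the composite is the set-theoretic composition $f'f''$ of the shadows (the shadow map is functorial to the category of linear relations), and its excess is exactly $\calk'+\calk''+\cale([f',f''])$ by part (2) of the preceding proposition. These two facts together are the stated composition formula.

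The monoidal-product formula is equally direct: taking the Cartesian product of chosen representatives makes the shadow $f'\times f''$, and decomposes the trajectory space from $0$ to $0$ as the direct sum of the two factors, so the excess of the product is $\calk'\oplus\calk''$. The description of ${\rm End}(\unitob)$ follows at once: a hyperrelation $\unitob\from\unitob$ has its shadow forced to be the unique Lagrangian in $\unitob$, leaving the excess $\calk$ as the only invariant, and the extra term $\cale([c([f']),c([f''])])$ in the composition formula collapses to $0$ because no nonzero trajectory can fit between the trivial shadows, so composition reduces to direct sum. The shifting action $\calk'\cdot(f,\calk)=(f,\calk'\oplus\calk)$ is then immediate from the monoidal-product formula, since tensoring with an endomorphism of $\unitob$ contributes nothing to the shadow factor.

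For the trace, I compute it in the rigid monoidal category ${\rm WW}(\gslrel)$ as the composition of the counit $\epsilon_X=\Delta_X\in\Hom(\unitob,X\times\xbar)$, which has zero excess, with the graph of the endomorphism, viewed as an element of $\Hom(X\times\xbar,\unitob)$ with shadow $\gamma_f$ and excess $\calk$. Applying the composition law already verified, the shadow is the unique Lagrangian in $\unitob$, while the excess is $0+\calk+\cale([\Delta_X,\gamma_f])$; the last term is by definition the trajectory $G$-space of this composition, and a one-line check identifies it with $\Delta_X\cap\gamma_f$, the diagonal image of the fixed-point space of $f$. Hence the trace is $\calk\oplus(\gamma_f\cap\Delta_X)$, as claimed. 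The only real bookkeeping obstacle anywhere is keeping the various $G$-space identifications straight when applying the excess formula; but once the bijection with indexed canonical relations is in hand, every verification is the ``elementary calculation'' that the author advertises.
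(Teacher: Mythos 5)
Your proposal is correct and follows exactly the route the paper intends: the paper's "proof" is only the remark that it "consists of elementary calculations," and your write-up supplies precisely those calculations — transporting the structure through the bijection with indexed relations, invoking part (2) of the excess proposition for composition, and identifying $\cale([\Delta_X,\gamma_f])$ with $\gamma_f\cap\Delta_X$ for the trace. The only quibble is notational: the counit to $\unitob$ from $X\times\xbar$ should be written $\epsilon_{\xbar}$ rather than $\epsilon_X$, a point the paper itself flags in the remark following the theorem.
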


\begin{rmk}
The trace of an endomorphism $f$ of $X$
is the endomorphism of the unit object~$\unitob$ defined as the composition
\smash{$\epsilon_{\overline X}\gamma_f$}, where $\epsilon_{\overline X}$ is the ``counit'' morphism to $\unitob$ from $X \times \overline X$ given by~the diagonal, and $\gamma_f$ to $X\times \overline X$ from $\unitob$ is the graph of $f$.\footnote{In \cite{li-we:selective}, $\epsilon_X$ is mistakenly substituted for $\epsilon_{\overline X}$ in the definition of the graph.}
\end{rmk}

\begin{ex}
Suppose that $G=S^1=\reals/2\pi\integers$. The isomorphism classes $\calk$ of linear $G$-spaces may be identified with finitely supported sequences of nonnegative integers $n_0,n_1,n_2,\dots$, where~$n_k$ is the multiplicity in any representative of $\calk$ of the irreducible representation $\theta\mapsto {\rm e}^{{\rm i}k\theta}$.
\end{ex}

\begin{ex}
 The fixed point set of a projection onto an invariant coiso\-tropic subspace $C$ is~$C$ itself.
\end{ex}

The referee has suggested two possible continuations of this work.
\begin{enumerate}\itemsep=0pt
\item The characterization of {$\gslrel$} might be related to an
equivariant version of normal forms for symplectic matrices \cite{gu:normal}. In particular,
it would be interesting to connect the different cases of the conjugacy class of
a symplectic matrix and hyper-Lagrangian subspaces.

\item In recent work \cite{co-me-st:frobenius}, it is suggested that the bicategory
SPAN is a higher-categorical version of the WW construction for the category REL of set-theoretic relations.
It would be interesting to see the 2-categorical analogue of $\gslrel$ and the
role of the isomorphism classes of linear $G$-spaces.
\end{enumerate}

\subsection*{Acknowledgements}
The author was partially supported by a grant from UC Berkeley.

\pdfbookmark[1]{References}{ref}
\LastPageEnding

\end{document}